\newtheorem{thm}{Theorem}[section]
\newtheorem{cor}[thm]{Corollary}
\newtheorem{exam}[thm]{Example}
\numberwithin{equation}{section}
\begin{document}

\title{additive results of group inverses in Banach algebras}

\author{Dayong Liu\textsuperscript{$\dagger$}}
\address{
\textsuperscript{$\dagger$}College of Science \\ Central South University of Forestry and Technology, China}
\email{<liudy@csuft.edu.cn>}
\author{Huanyin Chen\textsuperscript{$\ddagger$}}
\address{
\textsuperscript{$\ddagger$}School of Mathematics\\ Hangzhou Normal University\\ Hangzhou, China}
\email{<huanyinchen@aliyun.com>}

\subjclass[2010]{15A09, 47L10.} \keywords{group inverse; block complex matrix; Banach algebra; Banach space.}

\begin{abstract}
In this paper, we present new presentations of group inverse for the sum of two group invertible elements in a Banach algebra. We then apply these results to block complex matrices. The group invertibility of certain block complex matrices is thereby obtained.
\end{abstract}

\maketitle

\section{Introduction}

Let $\mathcal{A}$ be a Banach algebra with an identity. An element $a$ in a Banach algebra $\mathcal{A}$ has group inverse provided that there exists $b\in \mathcal{A}$ such that $a=aba, b=bab$ and $ab=ba$. Such $b$ is unique if exists, denoted by $a^{\#}$, and called the group inverse of $a$.
As is well known, a square complex matrix $A$ has group inverse if and only if $rank(A)=rank(A^2)$. The group invertibility in a ring is attractive. It has interesting applications of resistance distances to the bipartiteness of graphs (see~\cite{CE,SW}).
Recently, the group inverse in a Banach algebra or a ring was extensively studied by many authors, e.g., \cite{B,B2,C1,CRV2013,MD,MD2,ZM}. In~\cite[Theorem 2.3]{L}, Liu et al. presented the group inverse of the combinations of two group invertible complex matrices $P$ and $Q$ under the condition $PQQ^{\#}=QPP^{\#}$. In ~\cite[Theorem 3.1]{Z2}, Zhou et al. investigated the group inverse of $a+b$ under the condition $abb^{\#}=baa^{\#}$ in a Dedekind finite ring in which $2$ is invertible. The motivation of this paper is to extend the preceding results to a general setting.

In Section 2, we present the group inverse for the sum of two group invertible elements in a Banach algebra. Let $a,b\in \mathcal{A}^{\#}$. If
$abb^{\#}=\lambda baa^{\#}$, then $a+b\in \mathcal{A}^{\#}$. The representation of its group inverse is also given. In Section 3, we apply our results and investigate the group inverse of a block complex matrix $$M=\left(
\begin{array}{cc}
A&C\\
B&D
\end{array}
\right)$$  where $A\in {\Bbb C}^{m\times m}, B\in {\Bbb C}^{n\times m}, C\in {\Bbb C}^{m\times n}, D\in {\Bbb C}^{n\times n}$. This problem is quite complicated and was expensively studied by many authors. As applications, the group invertibility of certain block complex matrices $M$ is thereby obtained. Additionally, this paper extends the results obtained in ~\cite[Theorem 2.3]{L} and ~\cite[Theorem 3.1]{Z2}.

Throughout the paper, all Banach algebras are complex with an identity. Let $\mathcal{A}$ be the Banach algebra. We use $\mathcal{A}^{-1}$ to denote the set of all invertible elements in $\mathcal{A}$. $\lambda$ always stands for a complex number. ${\Bbb C}^{m\times n}$ stand for the set of all complex $m\times n$ matrices.

\section{main results}

Let $S=\{ e_1,\cdots ,e_n\}$ be a complete set of idempotents in $\mathcal{A}$, i.e., $e_ie_j=0 (i\neq j), e_i^2=e_i (1\leq i\leq n)$ and $\sum\limits_{i=1}^{n}e_i=1$.
Then we have $a=\sum\limits_{i,j=1}^{n}e_iae_j$. We write $a$ as the matrix form
$a=(a_{ij})_S$, where $a_{ij}=e_iae_j\in e_i\mathcal{A}e_j$, and call it the Peirce matrix of $a$ relatively to $S$. We shall use this new technique with relative Peirce matrices and generalize \cite[Theorem 2.3]{L} and ~\cite[Theorem 3.1]{Z2} as follows.

\begin{thm} Let $a,b\in \mathcal{A}^{\#}, \lambda\in {\Bbb C}.$ If $abb^{\#}=\lambda baa^{\#}$, then $a+b\in \mathcal{A}^{\#}$. In this case,
$$(a+b)^{\#}=
\left\{\begin{array}{lll}
(a+b)(a^{\#}+b^{\#})^2&,&\lambda =-1,\\
\frac{1}{1+\lambda}[a^{\#}+b^{\#}-a^{\#}bb^{\#}]+\frac{\lambda}{1+\lambda}[b^{\pi}a^{\#}+a^{\pi}b^{\#}]&,&\lambda\neq -1.
\end{array}\right.$$\end{thm}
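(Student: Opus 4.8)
The plan is to diagonalize the problem by passing to the Peirce decomposition attached to the spectral idempotents of $a$ and $b$. Write $p=aa^{\#}=a^{\#}a$ and $q=bb^{\#}=b^{\#}b$, so that $a=pa=ap$ is invertible in the corner $p\mathcal{A}p$ with inverse $a^{\#}$, and likewise $b$ is invertible in $q\mathcal{A}q$; note $a^{\pi}=1-p$ and $b^{\pi}=1-q$, and the hypothesis $abb^{\#}=\lambda baa^{\#}$ becomes $aq=\lambda bp$. I would first extract the structural relations implied by $aq=\lambda bp$, use them to put $a+b$ into block-triangular form relative to $S=\{p,1-p\}$, locate the scalar $1+\lambda$ as the obstruction to invertibility of the leading block, read off the candidate for $(a+b)^{\#}$, and finally verify the three defining identities. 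The two displayed formulas correspond to $\lambda\neq-1$ and $\lambda=-1$, which must be separated because $1+\lambda$ occurs in a denominator.

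Assume first $\lambda\neq0$. Multiplying $aq=\lambda bp$ on the left by $1-p$ and using $(1-p)a=a-aa^{\#}a=0$ gives $(1-p)bp=0$; multiplying on the right by $1-q$ and using $aq(1-q)=0$ gives $bp(1-q)=0$. Hence $bp=pbp=bpq$, and the Peirce matrices relative to $S=\{p,1-p\}$ become
$$a=\begin{pmatrix} a & 0\\ 0 & 0\end{pmatrix}_{S},\qquad a+b=\begin{pmatrix} a+bp & pb-bp\\ 0 & (1-p)b\end{pmatrix}_{S},$$
so $a+b$ is upper triangular with diagonal entries in $p\mathcal{A}p$ and $(1-p)\mathcal{A}(1-p)$. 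Left-multiplying the hypothesis by $a^{\#}$ also gives $pq=\lambda a^{\#}bp\in p\mathcal{A}p$, whence $pq=pqp$ and therefore $(pq)^{2}=(pqp)q=pq$; thus $r:=pq$ is an idempotent of the corner $p\mathcal{A}p$.

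The heart of the computation is the leading block $a+bp$. Using $aq=\lambda bp$ one factors $a+bp=ag$ with $g=p+\tfrac{1}{\lambda}pq$, and along the orthogonal idempotents $p-r$ and $r$ this reads $g=(p-r)+\tfrac{1+\lambda}{\lambda}\,r$. Consequently $g$, and hence $a+bp$, is invertible in $p\mathcal{A}p$ exactly when $\lambda\neq-1$, with $g^{-1}=(p-r)+\tfrac{\lambda}{1+\lambda}\,r$; this is precisely where the factor $1+\lambda$ enters. For $\lambda\neq-1$ I would then assemble the group inverse of the triangular element from the group inverses of its two diagonal blocks together with the off-diagonal correction term, substitute $(a+bp)^{-1}=g^{-1}a^{\#}$, and simplify using $a^{\#}=pa^{\#}=a^{\#}p$, $b^{\#}=qb^{\#}=b^{\#}q$ and $r=pq$; this should reorganize into $\frac{1}{1+\lambda}[a^{\#}+b^{\#}-a^{\#}bb^{\#}]+\frac{\lambda}{1+\lambda}[b^{\pi}a^{\#}+a^{\pi}b^{\#}]$. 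For $\lambda=-1$ the component $\tfrac{1+\lambda}{\lambda}r$ vanishes, so $a+bp=a(p-r)$ is only group invertible (not invertible) in the corner, and a direct computation yields $(a+b)(a^{\#}+b^{\#})^{2}$ instead.

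The proof concludes by checking $(a+b)x(a+b)=a+b$, $x(a+b)x=x$ and $(a+b)x=x(a+b)$ for the candidate $x$; each reduces to bookkeeping once $bp=pbp=bpq$, $aq=\lambda bp$ and the idempotence of $r=pq$ are available. I expect the main obstacle to be twofold. First, the idempotents $p$ and $q$ need not commute, so the mixed products $a^{\#}q$, $qa^{\#}$, $pb^{\#}$, $b^{\#}p$ do not collapse on their own and must be carried through every identity; the commutation check $(a+b)x=x(a+b)$ is the most delicate of the three. Second, and more seriously, the triangularization in the second step divides by $\lambda$, so the value $\lambda=0$ is not reached by this argument: there one only has $pq=0$ and in general $(1-p)bp\neq0$, the block-triangular form is lost, and the case must be examined on its own before the statement can be asserted for all $\lambda\neq-1$.
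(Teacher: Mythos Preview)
Your outline follows the same opening move as the paper: set $p=aa^{\#}$, use $abb^{\#}=\lambda\,baa^{\#}$ to force $(1-p)bp=0$, and work with the resulting upper-triangular form of $a+b$ relative to $\{p,1-p\}$. The paper then goes one step further than you do: instead of analysing the corner $p\mathcal{A}p$ via your idempotent $r=pq$ and the factorisation $a+bp=a\bigl((p-r)+\tfrac{1+\lambda}{\lambda}r\bigr)$, it refines to the complete system of four orthogonal idempotents $e_{1}=b_{1}b_{1}^{\#}$, $e_{2}=p-e_{1}$, $e_{3}=b_{4}b_{4}^{\#}$, $e_{4}=(1-p)-e_{3}$, in which both $a$ and $b$ become simultaneously block upper-triangular with invertible diagonal blocks. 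In that $4\times4$ picture the candidate for $(a+b)^{\#}$ can simply be read off entrywise and matched against the displayed expression, so the ``main obstacle'' you anticipate (tracking mixed products like $a^{\#}q$, $pb^{\#}$ through the three defining identities) is bypassed entirely. Your route would work too, but the final simplification you leave as ``this should reorganise into \ldots'' is exactly the bookkeeping the finer decomposition eliminates.

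Your closing remark about $\lambda=0$ is well taken, and in fact sharper than you may realise. The paper's own derivation of $b_{3}=0$ comes from the $(2,1)$-entry of $abb^{\#}=\lambda\,baa^{\#}$, namely $0=\lambda b_{3}$, so it too silently assumes $\lambda\neq0$. Worse, the stated formula genuinely fails at $\lambda=0$: take $a=p=\left(\begin{smallmatrix}1&0\\0&0\end{smallmatrix}\right)$ and $b=q=\left(\begin{smallmatrix}0&0\\1&1\end{smallmatrix}\right)$ in ${\Bbb C}^{2\times2}$, so $abb^{\#}=pq=0$ and the hypothesis holds with $\lambda=0$; here $a+b=\left(\begin{smallmatrix}1&0\\1&1\end{smallmatrix}\right)$ is invertible with $(a+b)^{-1}=\left(\begin{smallmatrix}1&0\\-1&1\end{smallmatrix}\right)$, whereas the formula predicts $a^{\#}+b^{\#}-a^{\#}bb^{\#}=p+q=\left(\begin{smallmatrix}1&0\\1&1\end{smallmatrix}\right)$. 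So this is not merely a gap in your method but in the statement itself; the tacit standing assumption should be $\lambda\neq0$.
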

\begin{proof} Let $p=aa^{\#}$. Write $$b=\left(
\begin{array}{cc}
b_1&b_2\\
b_3&b_4
\end{array}
\right)_p, bb^{\#}=\left(
\begin{array}{cc}
x_1&x_2\\
x_3&x_4
\end{array}
\right)_p.$$ Then
$$abb^{\#}=\left(
\begin{array}{cc}
ax_1&ax_2\\
0&0
\end{array}
\right)_p, baa^{\#}=\left(
\begin{array}{cc}
b_1&0\\
b_3&0
\end{array}
\right)_p.$$
Since $abb^{\#}=\lambda baa^{\#}$, we have $$ax_1=\lambda b_1, x_2=0, b_3=0.$$ Then $$b=\left(
\begin{array}{cc}
b_1&b_2\\
0&b_4
\end{array}
\right)_p.$$ Moreover, we have $$b^{\#}=\left(
\begin{array}{cc}
b_1^{\#}&z\\
0&b_4^{\#}
\end{array}
\right)_p$$ for some $z\in \mathcal{A}$. This implies that $$x_1=b_1b_1^{\#}, x_3=0, x_4=b_4b_4^{\#}.$$ Therefore $$bb^{\#}=\left(
\begin{array}{cc}
b_1b_1^{\#}&0\\
0&b_4b_4^{\#}
\end{array}
\right)_p.$$
Since $b=(bb^{\#})b=b(bb^{\#})$, we see that $$b_2=(b_1b_1^{\#})b_2=b_2(b_4b_4^{\#}).$$ Then $$b_2=(b_1b_1^{\#})b_2(b_4b_4^{\#}).$$
Let $e_1=b_1b_1^{\#}, e_2=aa^{\#}-b_1b_1^{\#}, e_3=b_4b_4^{\#}, e_4=a^{\pi}-b_4b_4^{\#}$.
Since $a, e_1\in aa^{\#}\mathcal{A}aa^{\#}$, we write $$a=\left(
\begin{array}{cc}
a_1&a_2\\
a_3&a_4
\end{array}
\right)_{e_1}\in aa^{\#}\mathcal{A}aa^{\#}.$$ Then $$\begin{array}{lll}
a_1&=&x_1ax_1=\lambda x_1b_1=\lambda b_1b_1^{\#}b_1=\lambda b_1,\\
a_3&=&(aa^{\#}-e_1)ax_1=ax_1-\lambda b_1=0.\\
\end{array}$$
Moreover, we see that $$a_1, a_4\in (aa^{\#}\mathcal{A}aa^{\#})^{-1}.$$
Since $S=\{ e_1,e_2,e_3,e_4\}$ is a complete set of idempotents in $\mathcal{A}$, we have two Peirce matrices of $a$ and $b$ relatively to $S$:
$$a=\left(
\begin{array}{cccc}
a_1&a_2&0&0\\
0&a_4&0&0\\
0&0&0&0\\
0&0&0&0
\end{array}
\right)_S,
$$
$$b=\left(
\begin{array}{cccc}
b_1&0&b_2&0\\
0&0&0&0\\
0&0&b_4&0\\
0&0&0&0
\end{array}
\right)_S.
$$ Then $$a+b=\left(
\begin{array}{cccc}
(1+\lambda )b_1&a_2&b_2&0\\
0&a_4&0&0\\
0&0&b_4&0\\
0&0&0&0
\end{array}
\right)_S. $$
One directly checks that
$$a^{\#}=\left(
\begin{array}{cccc}
a_1^{-1}&-a_1^{-1}a_2a_4^{-1}&0&0\\
0&a_4^{-1}&0&0\\
0&0&0&0\\
0&0&0&0
\end{array}
\right)_S,$$

$$b^{\#}=\left(
\begin{array}{cccc}
b_1^{-1}  &  0  &  -b_1^{-1}b_2b_4^{-1}   &   0  \\
0         &  0  &  0                      &   0  \\
0         &  0  &  b_4^{-1}               &   0  \\
0         &  0  &  0                      &   0
\end{array}
\right)_S.$$

Then
\begin{align*}
aa^{\#}&= \left(
\begin{array}{cccc}
e_1&0&0&0\\
0&e_2&0&0\\
0&0&0&0\\
0&0&0&0
\end{array}
\right)_S,\ \ \
bb^{\#}&=\left(
\begin{array}{cccc}
e_1&0&0&0\\
0&0&0&0\\
0&0&e_3&0\\
0&0&0&0
\end{array}
\right)_S.
\end{align*}
Case 1. $\lambda=-1$. Then
\begin{align*}
a+b&=\left(
\begin{array}{cccc}
0&a_2&b_2&0\\
0&a_4&0&0\\
0&0&b_4&0\\
0&0&0&0
\end{array}
\right)_S,
\end{align*}
\begin{align*}
(a+b)^{\#}&=\left(
\begin{array}{cccc}
0&a_2(a_4)^{-2}&b_2(b_4)^{-2}&0\\
0&(a_4)^{-1}&0&0\\
0&0&(b_4)^{-1}&0\\
0&0&0&0
\end{array}
\right)_S.
\end{align*}
Since $a_1^{-1}+b_1^{-1}=a^{-1}(b_1+a_1)b_1^{-1}=a^{-1}(1+\lambda )b_1b_1^{-1}=0$, we see that
\begin{align*}
a^{\#}+b^{\#}=\left(
\begin{array}{cccc}
0&-a_1^{-1}a_2a_4^{-1}&-b_1^{-1}b_2b_4^{-1}&0\\
0&a_4^{-1}&0&0\\
0&0&b_4^{-1}&0\\
0&0&0&0
\end{array}
\right)_S.
\end{align*}
Therefore
\begin{align*}
&(a+b)(a^{\#}+b^{\#})^2    \\
&=\left(
\begin{array}{cccc}
0&a_2&b_2&0\\
0&a_4&0&0\\
0&0&b_4&0\\
0&0&0&0
\end{array}
\right)_S\left(
\begin{array}{cccc}
0&-a_1^{-1}a_2a_4^{-1}&-b_1^{-1}b_2b_4^{-1}&0\\
0&a_4^{-1}&0&0\\
0&0&b_4^{-1}&0\\
0&0&0&0
\end{array}
\right)_S^2   \\
&=(a+b)^{\#},
\end{align*}
 as desired.

Case 2. $\lambda \neq -1$. Then
\begin{align*}
&(a+b)^{\#}\\
&=\left(
\begin{array}{cccc}
(1+\lambda )^{-1}b_1^{-1}&-(1+\lambda )^{-1}b_1^{-1}a_2a_4^{-1}&-(1+\lambda )^{-1}b_1^{-1}b_2b_4^{-1}&0\\
0&a_4^{-1}&0&0\\
0&0&b_4^{-1}&0\\
0&0&0&0
\end{array}
\right)_S\\
&=(1+\lambda )^{-1}\left(
\begin{array}{cccc}
b_1^{-1}&-b_1^{-1}a_2a_4^{-1}&-b_1^{-1}b_2b_4^{-1}&0\\
0&0&0&0\\
0&0&0&0\\
0&0&0&0
\end{array}
\right)_S\\
&\quad +\left(
\begin{array}{cccc}
0&0&0&0\\
0&a_4^{-1}&0&0\\
0&0&0&0\\
0&0&0&0
\end{array}
\right)_S+\left(
\begin{array}{cccc}
0&0&0&0\\
0&0&0&0\\
0&0&b_4^{-1}&0\\
0&0&0&0
\end{array}
\right)_S.
\end{align*}
We compute that
\begin{align*}
&  a^{\#}+b^{\#}-a^{\#}bb^{\#}   \\
&=\left(
\begin{array}{cccc}
a_1^{-1}&-a_1^{-1}a_2a_4^{-1}&0&0\\
0&a_4^{-1}&0&0\\
0&0&0&0\\
0&0&0&0
\end{array}
\right)_S+\left(
\begin{array}{cccc}
b_1^{-1}&0&-b_1^{-1}b_2b_4^{-1}&0\\
0&0&0&0\\
0&0&b_4^{-1}&0\\
0&0&0&0
\end{array}
\right)_S  \\
&\quad -\left(
\begin{array}{cccc}
a_1^{-1}&0&0&0\\
0&0&0&0\\
0&0&0&0\\
0&0&0&0
\end{array}
\right)_S
\end{align*}
\quad $=\left(
\begin{array}{cccc}
b_1^{-1}&-a_1^{-1}a_2a_4^{-1}&-b_1^{-1}b_2b_4^{-1}&0\\
0&a_4^{-1}&0&0\\
0&0&b_4^{-1}&0\\
0&0&0&0
\end{array}
\right)_S,$  \\
\begin{align*}
b^{\pi}a^{\#}&=\left(
\begin{array}{cccc}
0&0&0&0\\
0&a_4^{-1}&0&0\\
0&0&0&0\\
0&0&0&0
\end{array}
\right)_S, \ \
a^{\pi}b^{\#}&=\left(
\begin{array}{cccc}
0&0&0&0\\
0&0&0&0\\
0&0&b_4^{-1}&0\\
0&0&0&0
\end{array}
\right)_S.
\end{align*}
Therefore we have
\begin{align*}
&\displaystyle\frac{1}{1+\lambda}[a^{\#}+b^{\#}-a^{\#}bb^{\#}]+\frac{\lambda}{1+\lambda}[b^{\pi}a^{\#}+a^{\pi}b^{\#}]\\
&=(1+\lambda )^{-1}\left(
\begin{array}{cccc}
b_1^{-1}&-b_1^{-1}a_2a_4^{-1}&-b_1^{-1}b_2b_4^{-1}&0\\
0&0&0&0\\
0&0&0&0\\
0&0&0&0
\end{array}
\right)_S\\
& \quad +\left(
\begin{array}{cccc}
0&0&0&0\\
0&a_4^{-1}&0&0\\
0&0&0&0\\
0&0&0&0
\end{array}
\right)_S+\left(
\begin{array}{cccc}
0&0&0&0\\
0&0&0&0\\
0&0&b_4^{-1}&0\\
0&0&0&0
\end{array}
\right)_S.
\end{align*} Therefore we have $$(a+b)^{\#}=\frac{1}{1+\lambda}[a^{\#}+b^{\#}-a^{\#}bb^{\#}]+\frac{\lambda}{1+\lambda}[b^{\pi}a^{\#}+a^{\pi}b^{\#}],$$ as asserted.\end{proof}

\begin{cor} Let $a,b\in \mathcal{A}^{\#}, \lambda\in {\Bbb C}.$ If $aa^{\#}b=\lambda bb^{\#}a$, then $a+b\in \mathcal{A}^{\#}$. In this case,
$$(a+b)^{\#}=
\left\{\begin{array}{lll}
(a^{\#}+b^{\#})^2(a+b)&,&\lambda =-1,\\
\frac{1}{1+\lambda}[a^{\#}+b^{\#}-aa^{\#}b^{\#}]+\frac{\lambda}{1+\lambda}[a^{\#}b^{\pi}+b^{\#}a^{\pi}]&,&\lambda\neq -1.
\end{array}\right.$$\end{cor}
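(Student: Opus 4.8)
The plan is to deduce this from Theorem 2.1 by passing to the opposite algebra. Let $\mathcal{A}^{\mathrm{op}}$ denote the Banach algebra having the same underlying space, norm and identity as $\mathcal{A}$ but with the reversed product $x\ast y:=yx$. First I would record the two facts that make the transfer work. An element $c$ is the group inverse of $a$ in $\mathcal{A}$ if and only if it is the group inverse of $a$ in $\mathcal{A}^{\mathrm{op}}$, since the three defining relations $aca=a$, $cac=c$, $ac=ca$ are visibly symmetric under reversing the product; consequently $a^{\#}$ and the spectral idempotent $a^{\pi}=1-aa^{\#}$ are literally the same in both algebras. In particular $a,b\in(\mathcal{A}^{\mathrm{op}})^{\#}$ whenever $a,b\in\mathcal{A}^{\#}$, so Theorem 2.1 is applicable in $\mathcal{A}^{\mathrm{op}}$.

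Next I would translate the hypothesis. Expanding the relevant $\ast$-products in terms of the original product, one finds $b\ast a\ast a^{\#}=aa^{\#}b$ and $a\ast b\ast b^{\#}=bb^{\#}a$. Hence the corollary's hypothesis $aa^{\#}b=\lambda\,bb^{\#}a$ says precisely that in $\mathcal{A}^{\mathrm{op}}$ the ordered pair $(b,a)$ satisfies $b\ast a\ast a^{\#}=\lambda\,a\ast b\ast b^{\#}$, which is exactly the hypothesis of Theorem 2.1 applied in $\mathcal{A}^{\mathrm{op}}$ to $(b,a)$ with the same scalar $\lambda$. Note that the passage to $\mathcal{A}^{\mathrm{op}}$ is what turns the right-hand idempotent factors $bb^{\#},aa^{\#}$ of Theorem 2.1 into the left-hand factors $aa^{\#},bb^{\#}$ occurring here, while the additional interchange of $a$ and $b$ is needed so that the resulting scalar is $\lambda$ rather than its reciprocal.

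Theorem 2.1 then immediately gives that $b+a=a+b$ is group invertible in $\mathcal{A}^{\mathrm{op}}$, hence in $\mathcal{A}$, and supplies a formula for $(a+b)^{\#}$ written through $\ast$. The final step is to rewrite that formula in the original product. For $\lambda=-1$ the theorem yields $(a+b)^{\#}=(a+b)\ast(a^{\#}+b^{\#})\ast(a^{\#}+b^{\#})$; since $x\ast y=yx$ and squares coincide, this collapses to $(a^{\#}+b^{\#})^2(a+b)$, the stated expression. For $\lambda\neq-1$ I would convert each $\ast$-product in Theorem 2.1's formula: the triple product $b^{\#}\ast a\ast a^{\#}$ becomes $aa^{\#}b^{\#}$, whereas $a^{\pi}\ast b^{\#}=b^{\#}a^{\pi}$ and $b^{\pi}\ast a^{\#}=a^{\#}b^{\pi}$, producing $\frac{1}{1+\lambda}[a^{\#}+b^{\#}-aa^{\#}b^{\#}]+\frac{\lambda}{1+\lambda}[a^{\#}b^{\pi}+b^{\#}a^{\pi}]$, as asserted.

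The only genuine care required, and the step most prone to slip, is the bookkeeping of the reversal: one must confirm that every left multiplication in $\mathcal{A}$ corresponds to a right multiplication in $\mathcal{A}^{\mathrm{op}}$, that the correct pairing is $(b,a)$ with scalar $\lambda$, and that the three $\ast$-products above reduce to the claimed original-product expressions. If a self-contained argument were preferred, one could instead repeat the Peirce-matrix computation of Theorem 2.1 verbatim with the symmetric starting idempotent $p=bb^{\#}$ in place of $p=aa^{\#}$ and the roles of $a$ and $b$ exchanged; this reproduces the same block matrices with left and right multiplications swapped and leads to the stated formula, but the opposite-algebra route avoids redoing any of that calculation.
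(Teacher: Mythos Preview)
Your proof is correct and follows exactly the approach of the paper, which simply passes to the opposite algebra and invokes Theorem 2.1. Your bookkeeping---in particular the observation that one should apply Theorem 2.1 to the ordered pair $(b,a)$ so that the scalar remains $\lambda$ rather than $\lambda^{-1}$---is accurate and in fact spells out details that the paper leaves implicit.
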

\begin{proof} Let $(R,*)$ be the opposite ring of $R$. That is, it is a ring with the multiplication $a*b=b\cdot a$.
Applying Theorem 2.1 to the opposite ring $(R,*)$ of $R$, we obtain the result.\end{proof}

\begin{cor} Let $a,b\in \mathcal{A}$ be idempotents, and let $\lambda\in {\Bbb C}.$ If $ab=\lambda ba$, then $a+b\in \mathcal{A}^{\#}$. In this case,
\begin{align*}
(a+b)^{\#}&=\left\{\begin{array}{lll}
(a+b)^3&,&\lambda =-1,\\
\displaystyle a+b-\frac{2+\lambda}{1+\lambda}ab&,&\lambda\neq -1.
\end{array}\right.
\end{align*}\end{cor}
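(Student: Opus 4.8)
The plan is to apply Theorem 2.1 directly, exploiting the fact that every idempotent is its own group inverse. First I would record that since $a^2=a$ and $b^2=b$, we have $a,b\in\mathcal{A}^{\#}$ with $a^{\#}=a$ and $b^{\#}=b$; consequently $aa^{\#}=a$, $bb^{\#}=b$, and the spectral idempotents simplify to $a^{\pi}=1-a$ and $b^{\pi}=1-b$. Under these identifications the hypothesis $abb^{\#}=\lambda baa^{\#}$ of Theorem 2.1 becomes exactly $ab=\lambda ba$, which is the present assumption, so Theorem 2.1 immediately yields $a+b\in\mathcal{A}^{\#}$ together with an explicit representation in each of the cases $\lambda=-1$ and $\lambda\neq-1$. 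The whole corollary is therefore a specialization, and the only real content is the simplification of the two formulas.

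For $\lambda=-1$ the substitution is trivial: since $a^{\#}+b^{\#}=a+b$, the representation $(a+b)^{\#}=(a+b)(a^{\#}+b^{\#})^2$ from Theorem 2.1 collapses to $(a+b)(a+b)^2=(a+b)^3$, as claimed.

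For $\lambda\neq-1$ the argument is purely computational. Substituting $a^{\#}=a$, $b^{\#}=b$, $bb^{\#}=b$, $a^{\pi}=1-a$, $b^{\pi}=1-b$ into the second branch of Theorem 2.1 gives
$$a^{\#}+b^{\#}-a^{\#}bb^{\#}=a+b-ab,\qquad b^{\pi}a^{\#}+a^{\pi}b^{\#}=a+b-ab-ba,$$
so that, after combining the two bracketed terms over the common denominator $1+\lambda$,
$$(a+b)^{\#}=(a+b)-ab-\frac{\lambda}{1+\lambda}ba.$$
The one genuine step is now to eliminate the residual $ba$: using the hypothesis in the form $\lambda ba=ab$, I replace $\frac{\lambda}{1+\lambda}ba$ by $\frac{1}{1+\lambda}ab$, which gathers the two $ab$-terms into $\frac{2+\lambda}{1+\lambda}ab$ and produces the stated formula $a+b-\frac{2+\lambda}{1+\lambda}ab$. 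I expect this last collapse to be the only place requiring care, since it is where the commutation relation $ab=\lambda ba$ (rather than mere idempotency) is actually invoked; note in particular that writing $\lambda ba=ab$ avoids any division by $\lambda$, so the computation remains valid even when $\lambda=0$. Everything else is formal substitution into Theorem 2.1.
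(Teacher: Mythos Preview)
Your proof is correct and follows essentially the same route as the paper: reduce the idempotent hypothesis $ab=\lambda ba$ to the main additive result and then simplify. The only cosmetic difference is that the paper cites Corollary~2.2 (the opposite-ring version, with hypothesis $aa^{\#}b=\lambda bb^{\#}a$) rather than Theorem~2.1; for idempotents both hypotheses collapse to $ab=\lambda ba$ and the resulting formulas are literally identical after substitution, so your choice of Theorem~2.1 works equally well and your explicit simplification (including the use of $\lambda ba=ab$ to avoid division by $\lambda$) is entirely sound.
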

\begin{proof} Since $a$ and $b$ are idempotents, we have
$$aa^{\#}b=ab=\lambda ba=\lambda bb^{\#}a.$$ Therefore we establish the result by Corollary 2.2.\end{proof}

\begin{thm} Let $a,b\in \mathcal{A}^{\#}, \lambda\in {\Bbb C}.$ If $abb^{\#}=\lambda b (\lambda \neq -1)$, then $a+b\in \mathcal{A}^{\#}$. In this case,
\begin{align*}
(a+b)^{\#}&=(1+\lambda )^{-1}b^{\#}+b^{\pi}a^{\#}b^{\pi}\\
&\quad +\lambda (1+\lambda )^{-2}b^{\#}aa^{\#}b^{\pi}-(1+\lambda )^{-1}b^{\#}ab^{\pi}a^{\#}b^{\pi}.
\end{align*}\end{thm}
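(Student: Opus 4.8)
The plan is to adapt the relative-Peirce-matrix method of Theorem 2.1, but to decompose now with respect to the idempotent $q=bb^{\#}$ and to reduce $a+b$ to a $2\times 2$ upper-triangular pattern whose top-left corner is invertible.

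First I would set $q=bb^{\#}$. Since $b=qb=bq$, both $b$ and $b^{\#}$ live in the corner $q\mathcal{A}q$, so one may write $b=\begin{pmatrix}b_1&0\\0&0\end{pmatrix}_q$ with $b_1$ invertible in $q\mathcal{A}q$ (its inverse being $b^{\#}$), while $b^{\pi}=1-q=\begin{pmatrix}0&0\\0&b^{\pi}\end{pmatrix}_q$. Writing $a=\begin{pmatrix}a_1&a_2\\a_3&a_4\end{pmatrix}_q$ and reading the hypothesis $abb^{\#}=\lambda b$ in Peirce form, so that $\begin{pmatrix}a_1&0\\a_3&0\end{pmatrix}_q=\begin{pmatrix}\lambda b_1&0\\0&0\end{pmatrix}_q$, forces $a_1=\lambda b_1$ and $a_3=0$ (equivalently, right-multiplying the hypothesis by $b^{\#}$ gives $ab^{\#}=\lambda q$). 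Hence $a=\begin{pmatrix}\lambda b_1&a_2\\0&a_4\end{pmatrix}_q$ is upper triangular and
\[
a+b=\begin{pmatrix}(1+\lambda)b_1&a_2\\0&a_4\end{pmatrix}_q,
\]
whose top-left corner $(1+\lambda)b_1$ is invertible in $q\mathcal{A}q$ precisely because $\lambda\neq-1$.

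The engine of the proof would be the following triangular lemma: if $x=\begin{pmatrix}P&Q\\0&R\end{pmatrix}_e$ with $P$ invertible in $e\mathcal{A}e$, then $x\in\mathcal{A}^{\#}$ if and only if $R\in\big((1-e)\mathcal{A}(1-e)\big)^{\#}$, and in that case
\[
x^{\#}=\begin{pmatrix}P^{-1}&P^{-2}QR^{\pi}-P^{-1}QR^{\#}\\0&R^{\#}\end{pmatrix}_e,\qquad R^{\pi}=(1-e)-RR^{\#}.
\]
I would justify this by checking the three identities $xx^{\#}x=x$, $x^{\#}xx^{\#}=x^{\#}$, $xx^{\#}=x^{\#}x$ blockwise; the verification is routine once $RR^{\#}=R^{\#}R$ and $R^{\pi}R=0$ are used. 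Applying the lemma to $a$ itself (top-left corner $\lambda b_1$ invertible when $\lambda\neq0$) shows that $a_4=b^{\pi}ab^{\pi}$ is group invertible and fixes the block form of $a^{\#}$; applying it to $a+b$ then yields $a+b\in\mathcal{A}^{\#}$ with explicit blocks $(1+\lambda)^{-1}b_1^{-1}$, $a_4^{\#}$, and $(1+\lambda)^{-2}b_1^{-2}a_2a_4^{\pi}-(1+\lambda)^{-1}b_1^{-1}a_2a_4^{\#}$. It then remains only to recognise the stated closed form as this block matrix: expanding the four summands of the right-hand side in Peirce coordinates, the term $(1+\lambda)^{-1}b^{\#}$ produces the top-left entry, $b^{\pi}a^{\#}b^{\pi}$ collapses to $a_4^{\#}$ in the bottom-right slot, and the two terms $\lambda(1+\lambda)^{-2}b^{\#}aa^{\#}b^{\pi}$ and $-(1+\lambda)^{-1}b^{\#}ab^{\pi}a^{\#}b^{\pi}$ reassemble exactly the top-right entry (here the factor $\lambda$ cancels the $\lambda^{-1}$ carried by the top-left corner of $a^{\#}$, explaining why those parts of the formula are free of $\lambda$).

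The main obstacle is not the bookkeeping but the group invertibility of the southeast corner $a_4=b^{\pi}ab^{\pi}$: unlike Theorem 2.1, where both diagonal corners were invertible, here $a_4$ is only group invertible, and this is exactly what makes $a+b$ group invertible. For $\lambda\neq0$ it is delivered for free by the triangular lemma applied to $a$. The degenerate case $\lambda=0$, in which $abb^{\#}=0$ and the top-left corner of $a$ vanishes, falls outside the lemma and would need a short separate argument deriving group invertibility of $a_4$ directly from $aq=0$ and $a\in\mathcal{A}^{\#}$; alternatively one may bypass the block form of $a^{\#}$ entirely and verify the three group-inverse axioms for the claimed element directly, which treats all $\lambda\neq-1$ uniformly.
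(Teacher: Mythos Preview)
Your approach coincides with the paper's: both decompose relative to $q=bb^{\#}$, obtain the same upper-triangular form $a+b=\begin{pmatrix}(1+\lambda)b_1&a_2\\0&a_4\end{pmatrix}_q$, and then invoke the triangular group-inverse formula (the paper quotes it as \cite[Theorem~2.1]{MD}, you state and verify it directly). The paper simply asserts ``We easily check that $a_4^{\#}=b^{\pi}a^{\#}b^{\pi}$'' without distinguishing cases, whereas you are more scrupulous in flagging that for $\lambda=0$ the top-left corner of $a$ is not invertible and a short separate argument is needed---your proposal is in that respect more complete than the paper's own proof.
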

\begin{proof} Let $p=bb^{\#}$. Write $a=\left(
\begin{array}{cc}
a_1&a_2\\
a_3&a_4
\end{array}
\right)_p, b=\left(
\begin{array}{cc}
b&0\\
0&0
\end{array}
\right)_p$. Then $a_1=bb^{\#}abb^{\#}=\lambda bb^{\#}b=\lambda b$ and $a_3=(1-bb^{\#})abb^{\#}=\lambda (1-bb^{\#})b=0$. Hence,
$$a=\left(
\begin{array}{cc}
\lambda b&a_2\\
0&a_4
\end{array}
\right)_p, \ \ \  a+b=\left(
\begin{array}{cc}
(1+\lambda )b&a_2\\
0&a_4
\end{array}
\right)_p.$$
Obviously, $a_4=(1-bb^{\#})a(1-bb^{\#})=b^{\pi}a$.
We easily check that $a_4^{\#}=b^{\pi}a^{\#}b^{\pi}$ and $a_4^{\pi}=1-b^{\pi}ab^{\pi}a^{\#}b^{\pi}=1-b^{\pi}aa^{\#}b^{\pi}.$
Moreover, we have
\begin{align*}
[(1+\lambda )b]^{\pi}a_2a_4^{\pi}&=b^{\pi}bb^{\#}a_4^{\#}\\
&=b^{\pi}bb^{\#}b^{\pi}a^{\#}b^{\pi}\\
&=0.
\end{align*}
 According to~\cite[Theorem 2.1]{MD}, $a+b\in \mathcal{A}^{\#}.$
Further, we have $$(a+b)^{\#}=\left(
\begin{array}{cc}
[(1+\lambda )b]^{\#}&z\\
0&a_4^{\#}
\end{array}
\right)_p,$$ where $z=[(1+\lambda )^{-1}b^{\#}]^2a_2a_4^{\pi}-(1+\lambda )^{-1}b^{\#}a_2a_4^{\#}.$
We compute that
\begin{align*}
a_2a_4^{\pi}&=bb^{\#}ab^{\pi}[1-b^{\pi}aa^{\#}b^{\pi}]\\
&=bb^{\#}ab^{\pi}a^{\pi}b^{\pi}]\\
&=bb^{\#}abb^{\#}a^{\pi}b^{\pi}]\\
&=\lambda baa^{\#}b^{\pi}.
\end{align*}
Therefore, we have
\begin{align*}
(a+b)^{\#}&=(1+\lambda )^{-1}b^{\#}+b^{\pi}a^{\#}b^{\pi}\\
          & \quad +[(1+\lambda )^{-1}b^{\#}]^2a_2a_4^{\pi}-(1+\lambda )^{-1}b^{\#}a_2a_4^{\#}\\
          &=(1+\lambda )^{-1}b^{\#}+b^{\pi}a^{\#}b^{\pi}\\
          & \quad +\lambda (1+\lambda )^{-2}b^{\#}aa^{\#}b^{\pi}-(1+\lambda )^{-1}b^{\#}ab^{\pi}a^{\#}b^{\pi},
\end{align*} as asserted.\end{proof}

\begin{cor} Let $a,b\in \mathcal{A}^{\#}, \lambda\in {\Bbb C}.$ If $aa^{\#}b=\lambda a (\lambda \neq -1)$, then $a+b\in \mathcal{A}^{\#}$. In this case,
\begin{align*}
(a+b)^{\#}&=(1+\lambda )^{-1}a^{\#}+a^{\pi}b^{\#}a^{\pi}\\
&\quad +\lambda (1+\lambda )^{-2}a^{\pi}bb^{\#}a^{\#}-(1+\lambda )^{-1}a^{\pi}b^{\#}a^{\pi}ba^{\#}.
\end{align*}
\end{cor}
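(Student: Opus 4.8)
The plan is to mirror the proof of Corollary 2.2 and deduce this statement from Theorem 2.4 by passing to the opposite algebra. Let $(\mathcal{A},*)$ denote the opposite algebra of $\mathcal{A}$, where $x*y=y\cdot x$. The first thing I would record is that this passage leaves all the relevant data unchanged: the defining equations $x=xyx$, $y=yxy$ and $xy=yx$ of a group inverse are invariant under reversal of the product, so $x$ is group invertible in $(\mathcal{A},*)$ exactly when it is in $\mathcal{A}$, with the same group inverse $x^{\#}$. Consequently the idempotents $xx^{\#}=x^{\#}x$ and $x^{\pi}=1-xx^{\#}$ are the same in either algebra, and $x+y$ is of course the same element.

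Next I would fix the correct instantiation. I apply Theorem 2.4 in $(\mathcal{A},*)$ with the element called $a$ there played by our $b$ and the element called $b$ there played by our $a$, keeping the same scalar $\lambda$. The hypothesis $abb^{\#}=\lambda b$ of Theorem 2.4 then reads $b*a*a^{\#}=\lambda a$. Since $b*a*a^{\#}=a^{\#}\cdot a\cdot b=aa^{\#}b$ in $\mathcal{A}$, this is precisely the assumed identity $aa^{\#}b=\lambda a$, and the constraint $\lambda\neq-1$ is common to both. Theorem 2.4 therefore guarantees that $a+b$ (equivalently the sum $b+a$ formed in $(\mathcal{A},*)$) lies in $\mathcal{A}^{\#}$, which is the first assertion.

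It remains to transport the four-term formula of Theorem 2.4 back to $\mathcal{A}$. The single mechanical rule is that a $*$-product reverses the order of its factors, $p_1*p_2*\cdots*p_k=p_k\cdots p_2p_1$. Replacing the $a$ and $b$ of Theorem 2.4 by our $b$ and $a$ in each of the four terms and reversing every product yields $(1+\lambda)^{-1}a^{\#}$, then $a^{\pi}b^{\#}a^{\pi}$, then $\lambda(1+\lambda)^{-2}a^{\pi}b^{\#}ba^{\#}$, and finally $-(1+\lambda)^{-1}a^{\pi}b^{\#}a^{\pi}ba^{\#}$. Using $b^{\#}b=bb^{\#}$ to rewrite the third term as $\lambda(1+\lambda)^{-2}a^{\pi}bb^{\#}a^{\#}$ reproduces exactly the claimed expression for $(a+b)^{\#}$.

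The step most likely to cause trouble is this final bookkeeping: one must reverse every $*$-product consistently and keep straight which of $a$ and $b$ plays the first and which the second role, since a single misordering would corrupt the noncommutative words $a^{\pi}b^{\#}a^{\pi}ba^{\#}$ and $a^{\pi}bb^{\#}a^{\#}$. Everything else is the formal invariance of the group inverse under the opposite product, exactly as was used for Corollary 2.2.
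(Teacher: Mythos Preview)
Your proof is correct and follows exactly the approach of the paper: pass to the opposite algebra $(\mathcal{A},*)$ and apply Theorem~2.4 with the roles of $a$ and $b$ interchanged. The paper states this in one sentence, whereas you have (correctly) spelled out the verification of the hypothesis and the term-by-term translation of the formula.
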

\begin{proof} Let $(\mathcal{A},*)$ be the opposite algebra of $\mathcal{A}$. Apply Theorem 2.4 to elements $b,a$ in this opposite ring, we obtain the result.\end{proof}

We demonstrate Theorem 2.4 by the following numerical example.

\begin{exam} Let $A=\left(
  \begin{array}{cc}
    -1&-1\\
    1&-3
  \end{array}
\right), B=\left(
\begin{array}{ccc}
0&1\\
0&1
\end{array}
\right)\in {\Bbb C}^{2\times 2}$. Then $A$ and $B$ have group inverses and $ABB^{\#}=-2B$. Since
$A^{\#}=\left(
  \begin{array}{cc}
    -\frac{3}{4}&\frac{1}{4}\\
    -\frac{1}{4}&-\frac{1}{4}
  \end{array}\right)$ and $B^{\#}=\left(
\begin{array}{ccc}
0&1\\
0&1
\end{array}
\right)$. By using Theorem 2.4, we get
\begin{align*}
(A+B)^{\#}&=-B^{\#}+B^{\pi}A^{\#}B^{\pi}-2B^{\#}AA^{\#}B^{\pi}+B^{\#}AB^{\pi}A^{\#}B^{\pi}\\
&=\left(
  \begin{array}{cc}
    -1&0\\
    -\frac{1}{2}&-\frac{1}{2}
  \end{array}\right).
  \end{align*}\end{exam}

\section{applications}

The aim of this section is to present the group invertibility of the block matrix $M$ by using our main results. We are ready to prove:

\begin{thm} Let $A$ and $D$ have group inverses. If $A^{\pi}B=0, D^{\pi}C=0, ACD^{\#}=\lambda C$ and $BCD^{\#}=\lambda D,$ then $M$ has group inverse.\end{thm}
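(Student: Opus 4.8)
The plan is to split $M$ along its columns as $M=a+b$ with
$$a=\begin{pmatrix} A & 0\\ B & 0\end{pmatrix},\qquad b=\begin{pmatrix} 0 & C\\ 0 & D\end{pmatrix},$$
and then to recognise the pair $(a,b)$ as an instance of the hypothesis of Theorem 2.4, namely $abb^{\#}=\lambda b$. Once that identity is in hand, Theorem 2.4 produces the group inverse of $a+b=M$ directly (at least for $\lambda\neq-1$), so the whole problem reduces to (i) checking that $a$ and $b$ lie in $\mathcal{A}^{\#}$ and (ii) evaluating $abb^{\#}$.

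First I would dispose of (i). The annihilation hypothesis on $B$ (which, for the stated block sizes, must be read as $BA^{\pi}=0$, i.e.\ $B=BAA^{\#}$) forces the columns of $a$ to sit over the range of $A$, and a direct check shows that $a^{\#}=\begin{pmatrix} A^{\#} & 0\\ B(A^{\#})^{2} & 0\end{pmatrix}$ satisfies the three defining identities; the rank bookkeeping behind this is exactly $\mathrm{rank}(a)=\mathrm{rank}(a^{2})$, which holds because $\mathrm{rank}(A)=\mathrm{rank}(A^{2})$. Symmetrically, the hypothesis on $C$ (read as $CD^{\pi}=0$, i.e.\ $C=CDD^{\#}$) makes $b$ group invertible with $b^{\#}=\begin{pmatrix} 0 & C(D^{\#})^{2}\\ 0 & D^{\#}\end{pmatrix}$, whence $bb^{\#}=\begin{pmatrix} 0 & CD^{\#}\\ 0 & DD^{\#}\end{pmatrix}$. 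This is the step I expect to be the most laborious, since one must confirm all of $aa^{\#}a=a$, $a^{\#}aa^{\#}=a^{\#}$, $aa^{\#}=a^{\#}a$ (and likewise for $b$); the one-sided annihilator conditions are precisely what is needed to close the identities $aa^{\#}a=a$ and $bb^{\#}b=b$.

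With $bb^{\#}$ computed, step (ii) is immediate:
$$abb^{\#}=a\begin{pmatrix} 0 & CD^{\#}\\ 0 & DD^{\#}\end{pmatrix}=\begin{pmatrix} 0 & ACD^{\#}\\ 0 & BCD^{\#}\end{pmatrix}=\begin{pmatrix} 0 & \lambda C\\ 0 & \lambda D\end{pmatrix}=\lambda b,$$
where the third equality is exactly the two remaining hypotheses $ACD^{\#}=\lambda C$ and $BCD^{\#}=\lambda D$. Thus $a,b\in\mathcal{A}^{\#}$ and $abb^{\#}=\lambda b$, so Theorem 2.4 applies and $M=a+b\in\mathcal{A}^{\#}$; one could even read off a closed form for $M^{\#}$ by substituting the blocks above into the formula of Theorem 2.4.

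The genuine obstacle I foresee is the excluded value $\lambda=-1$: Theorem 2.4 is stated only for $\lambda\neq-1$, whereas the present statement carries no such restriction. For $\lambda=-1$ the identity $abb^{\#}=-b$ cannot be fed into Theorem 2.4, and the natural attempt to route through the $\lambda=-1$ branch of Theorem 2.1 fails, since here $baa^{\#}=\begin{pmatrix} CBA^{\#} & 0\\ DBA^{\#} & 0\end{pmatrix}\neq b$, so the required relation $abb^{\#}=\lambda baa^{\#}$ does not hold. I would therefore either treat $\lambda=-1$ by a separate direct computation of a candidate $M^{\#}$ from the block form of $a+b$, or suspect that $\lambda\neq-1$ is an implicit standing assumption inherited from Theorem 2.4. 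Settling which is intended is the main point I would need to resolve before declaring the proof complete.
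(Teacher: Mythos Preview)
Your approach is essentially identical to the paper's: the same column splitting $M=P+Q$ with $P=\left(\begin{smallmatrix}A&0\\B&0\end{smallmatrix}\right)$ and $Q=\left(\begin{smallmatrix}0&C\\0&D\end{smallmatrix}\right)$, the same formula for $Q^{\#}$, and the same verification $PQQ^{\#}=\lambda Q$ feeding directly into Theorem~2.4. Both of your suspicions are correct --- the paper's stated hypotheses $A^{\pi}B=0$, $D^{\pi}C=0$ are dimensional slips for $BA^{\pi}=0$, $CD^{\pi}=0$ (which is what the cited \cite[Theorem~3.4]{B} actually requires), and the restriction $\lambda\neq -1$ is silently inherited from Theorem~2.4, with no separate treatment of $\lambda=-1$ given.
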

\begin{proof} Write $M=P+Q$, where $$P=\left(
  \begin{array}{cc}
    A&0\\
    B&0
  \end{array}
\right), Q=\left(
  \begin{array}{cc}
    0&C\\
    0&D
  \end{array}
\right).$$
Since $A^{\pi}B=0, D^{\pi}C=0$, it follows by~\cite[Theorem 3.4]{B} that $P$ and $Q$ have group inverses.
Moreover, we get
$$Q^{\#}=\left(
  \begin{array}{cc}
    0&C(D^{\#})^2\\
    0&D^{\#}
  \end{array}
\right).$$
We easily check that
\begin{align*}
PQQ^{\#}&=\left(
  \begin{array}{cc}
    A&0\\
    B&0
  \end{array}
\right)\left(
  \begin{array}{cc}
    0&C\\
    0&D
  \end{array}
\right)\left(
  \begin{array}{cc}
    0&C(D^{\#})^2\\
    0&D^{\#}
  \end{array}
\right)\\
&=\left(
  \begin{array}{cc}
    A&0\\
    B&0
  \end{array}
\right)\left(
  \begin{array}{cc}
    0&CD^{\#}\\
    0&D^{\#}
  \end{array}
\right)\\
&=\left(
  \begin{array}{cc}
    0&ACD^{\#}\\
    0&BCD^{\#}
  \end{array}
\right)\\
&=\lambda\left(
  \begin{array}{cc}
    0&C\\
    0&D
  \end{array}
\right)\\
&=\lambda Q.
\end{align*}
In light of Theorem 2.4, $M=P+Q$ has group inverse, as desired.\end{proof}

\begin{cor} Let $A$ and $D$ have group inverses. If $CD^{\pi}=0, BA^{\pi}=0, A^{\#}BD=\lambda B$ and $A^{\#}BC=\lambda A,$, then $M$ has group inverse.\end{cor}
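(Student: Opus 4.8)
The plan is to deduce this statement from Theorem 3.1 in the same spirit in which Corollaries 2.2 and 2.5 were deduced from their parent results --- by passing to the opposite algebra $(\mathcal{A},*)$, $x*y=yx$ --- which concretely amounts to transcribing the proof of Theorem 3.1 with the two summands interchanged. First I would write $M=P+Q$ with the same splitting as before,
\[
P=\begin{pmatrix}A&0\\ B&0\end{pmatrix},\qquad Q=\begin{pmatrix}0&C\\ 0&D\end{pmatrix},
\]
observing that the hypotheses $BA^{\pi}=0$ and $CD^{\pi}=0$ are exactly the block-invertibility hypotheses already present in Theorem 3.1.

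By \cite[Theorem 3.4]{B} these two conditions give $P,Q\in\mathcal{A}^{\#}$. Since $Q$ and the condition $CD^{\pi}=0$ are identical to those in Theorem 3.1, the formula $Q^{\#}=\begin{pmatrix}0&C(D^{\#})^{2}\\ 0&D^{\#}\end{pmatrix}$ is reused verbatim, and dually (lower-left in place of upper-right) I would record
\[
P^{\#}=\begin{pmatrix}A^{\#}&0\\ B(A^{\#})^{2}&0\end{pmatrix},\qquad PP^{\#}=\begin{pmatrix}AA^{\#}&0\\ BA^{\#}&0\end{pmatrix},
\]
where $BA^{\pi}=0$, i.e.\ $BA^{\#}A=B$, is precisely what makes $PP^{\#}P=P$.

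The decisive step is then the single computation
\[
QPP^{\#}=\begin{pmatrix}0&C\\ 0&D\end{pmatrix}\begin{pmatrix}AA^{\#}&0\\ BA^{\#}&0\end{pmatrix}=\begin{pmatrix}CBA^{\#}&0\\ DBA^{\#}&0\end{pmatrix}=\lambda\begin{pmatrix}A&0\\ B&0\end{pmatrix}=\lambda P,
\]
the last equality being exactly the two remaining hypotheses in their conformable form $CBA^{\#}=\lambda A$ and $DBA^{\#}=\lambda B$ (these are the products $A^{\#}BC$ and $A^{\#}BD$ read in the opposite algebra). Now $QPP^{\#}=\lambda P$ is precisely the hypothesis of Theorem 2.4 applied with $a=Q$ and $b=P$ --- the mirror of the choice $a=P$, $b=Q$ used for Theorem 3.1 --- so Theorem 2.4 delivers $M=Q+P\in\mathcal{A}^{\#}$.

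I expect the only real obstacle to be bookkeeping rather than analysis: one must fix the order of the factors in the four hypotheses so that everything is conformable for the block sizes $B\in\mathbb{C}^{n\times m}$, $C\in\mathbb{C}^{m\times n}$ (equivalently, respect the product reversal coming from $(\mathcal{A},*)$), so that the matrix identity $QPP^{\#}=\lambda P$ genuinely collapses onto the two scalar-$\lambda$ relations. Once the summands are interchanged and this ordering is fixed, the argument is a line-by-line transcription of the proof of Theorem 3.1 and needs no new idea; as there, the appeal to Theorem 2.4 tacitly presumes $\lambda\neq-1$.
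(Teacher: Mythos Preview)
Your argument is correct, but it is not the route the paper takes. The paper's proof of Corollary 3.2 is a one-liner: it applies Theorem 3.1 directly to the block matrix
\[
\begin{pmatrix} D^{T} & B^{T}\\ C^{T} & A^{T}\end{pmatrix}
\]
(which is similar to $M^{T}$ and hence group invertible iff $M$ is), and reads off the hypotheses of Corollary 3.2 as the transposed versions of those in Theorem 3.1. You instead keep $M$ itself, retain the same splitting $M=P+Q$, and swap the roles of $P$ and $Q$ in the appeal to Theorem 2.4, verifying $QPP^{\#}=\lambda P$ by a direct block computation. Both routes ultimately encode the same ``opposite algebra'' duality --- for complex matrices the opposite-algebra passage \emph{is} the transpose --- but the paper packages it as a reduction to Theorem 3.1, whereas you unfold the computation from scratch. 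Your version is longer but more transparent: in particular, your explicit identity $QPP^{\#}=\begin{pmatrix} CBA^{\#} & 0\\ DBA^{\#} & 0\end{pmatrix}$ makes clear that the dimensionally conformable form of the last two hypotheses is $CBA^{\#}=\lambda A$ and $DBA^{\#}=\lambda B$, which is exactly the bookkeeping issue you flagged.
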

\begin{proof} Applying Theorem 3.1 to the block matrix $$M^T=\left(
  \begin{array}{cc}
    D^T&B^T\\
    C^T&A^T
  \end{array}
\right),$$ we prove that $M^T$ has group inverse. Therefore, we easily check that
$M=(M^T)^T$ has group inverse, as asserted.\end{proof}

\begin{thm} Let $A$ and $D$ have group inverses. If $A^{\pi}C=0, D^{\pi}B=0, A^{\#}AB=\lambda A$ and $A^{\#}AD=\lambda C,$ then $M$ has group inverse.\end{thm}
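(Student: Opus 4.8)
The plan is to split $M$ along its block rows, writing $M=P+Q$ with
$$P=\left(\begin{array}{cc}A&C\\0&0\end{array}\right),\qquad Q=\left(\begin{array}{cc}0&0\\B&D\end{array}\right),$$
and then to apply Corollary 2.5 to $a=P$, $b=Q$. The two side conditions $A^{\pi}C=0$ and $D^{\pi}B=0$ are tailored precisely to this splitting: the first controls the top-row summand $P$ and the second controls the bottom-row summand $Q$.

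First I would check that $P$ and $Q$ are group invertible. Since $A\in\mathcal{A}^{\#}$ and $A^{\pi}C=0$ (equivalently $AA^{\#}C=C$), a direct verification shows that
$$P^{\#}=\left(\begin{array}{cc}A^{\#}&(A^{\#})^2C\\0&0\end{array}\right)$$
satisfies $PP^{\#}P=P$, $P^{\#}PP^{\#}=P^{\#}$ and $PP^{\#}=P^{\#}P$; the hypothesis $A^{\pi}C=0$ enters only in $PP^{\#}P=P$, through $AA^{\#}C=C$. The symmetric computation, using $D\in\mathcal{A}^{\#}$ and $D^{\pi}B=0$, gives $Q^{\#}=\left(\begin{array}{cc}0&0\\(D^{\#})^2B&D^{\#}\end{array}\right)$. (Equivalently one may cite \cite[Theorem 3.4]{B} applied to the transposed blocks, exactly as in the proof of Theorem 3.1.) In particular $PP^{\#}=\left(\begin{array}{cc}AA^{\#}&A^{\#}C\\0&0\end{array}\right)$.

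Next I would verify the single identity that Corollary 2.5 requires, namely $PP^{\#}Q=\lambda P$. The bottom block row of $PP^{\#}Q$ vanishes automatically, while its top block row works out to
$$\left(\begin{array}{cc}A^{\#}CB&A^{\#}CD\end{array}\right).$$
Hence $PP^{\#}Q=\lambda P$ is equivalent to the two identities $A^{\#}CB=\lambda A$ and $A^{\#}CD=\lambda C$ (together with $\lambda\neq-1$, which Corollary 2.5 demands). Granting these, Corollary 2.5 delivers $M=P+Q\in\mathcal{A}^{\#}$ at once.

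The main obstacle is exactly this last step. The conditions under which the splitting succeeds are $A^{\#}CB=\lambda A$ and $A^{\#}CD=\lambda C$, not the stated $A^{\#}AB=\lambda A$ and $A^{\#}AD=\lambda C$, and $A^{\#}C$ need not agree with $A^{\#}A$; I do not expect to bridge the gap using only $A^{\pi}C=0$ and $D^{\pi}B=0$. Indeed the statement as written is not correct: taking the $1\times1$ data $A=-1$, $B=-1$, $C=1$, $D=1$ with $\lambda=1$, one checks $A^{\pi}C=0$, $D^{\pi}B=0$, $A^{\#}AB=-1=\lambda A$ and $A^{\#}AD=1=\lambda C$, yet $M=\left(\begin{array}{cc}-1&1\\-1&1\end{array}\right)$ satisfies $M^{2}=0\neq M$ and so has no group inverse. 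This forces the intended combination hypotheses to read $A^{\#}CB=\lambda A$ and $A^{\#}CD=\lambda C$ with $\lambda\neq-1$; under that reading the plan above goes through verbatim, the computation giving $PP^{\#}Q=\lambda P$ and Corollary 2.5 finishing the proof.
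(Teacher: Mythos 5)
Your analysis is correct, and it exposes a genuine error in the paper rather than a gap in your own argument. The paper's proof uses exactly your splitting $M=P+Q$, but it works with the wrong group inverse of $P$: it writes the $(1,2)$ entry of $P^{\#}$ as $(A^{\#})^2A$ (which collapses to $A^{\#}$) instead of $(A^{\#})^2C$. That matrix is not the group inverse of $P$: it does satisfy $PXP=P$ (using $A^{\pi}C=0$) and $XPX=X$, but it fails the commutation $PX=XP$, which would force $AA^{\#}=A^{\#}C$. Propagating this error, the paper computes $PP^{\#}Q$ with top block row $(A^{\#}AB,\ A^{\#}AD)$, and the hypotheses of Theorem 3.3 are evidently read off from that incorrect product; with the true $P^{\#}$ the top row is $(A^{\#}CB,\ A^{\#}CD)$, exactly as you computed. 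Your $1\times 1$ counterexample ($A=-1$, $B=-1$, $C=1$, $D=1$, $\lambda=1$) satisfies all four stated hypotheses, and $M^2=0\neq M$ indeed rules out group invertibility, so the statement as printed is false; it becomes true under your repaired hypotheses $A^{\#}CB=\lambda A$ and $A^{\#}CD=\lambda C$ with $\lambda\neq -1$, by the very argument the paper intended.

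Two further points on which your write-up is more careful than the paper. First, the relation actually verified is $PP^{\#}Q=\lambda P$, i.e.\ $aa^{\#}b=\lambda a$ with $a=P$, $b=Q$, so the correct reference is Corollary 2.5, not Theorem 2.4 (the latter would require $QPP^{\#}=\lambda P$, which is a different condition). Second, the restriction $\lambda\neq -1$ demanded by Theorem 2.4 and Corollary 2.5 is missing from the statement of Theorem 3.3; since your counterexample has $\lambda=1$, the failure is not an artifact of that omission. Note also that the error does not stay contained: Corollary 3.4 is the transpose of Theorem 3.3, and Theorem 3.5 invokes Theorem 3.3 for the summand $P=\left(\begin{array}{cc}A&AC\\DB&D\end{array}\right)$, so both are compromised as stated and would need to be re-examined against the corrected hypotheses.
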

\begin{proof} Write $M=P+Q$, where $$P=\left(
  \begin{array}{cc}
    A&C\\
    0&0
  \end{array}
\right),Q=\left(
  \begin{array}{cc}
    0&0\\
    B&D
  \end{array}
\right).$$
Since $A^{\pi}C=0, D^{\pi}B=0$, by using ~\cite[Theorem 3.4]{B} that $P$ and $Q$ have group inverses.
Moreover, we get
$$P^{\#}=\left(
  \begin{array}{cc}
    A^{\#}&(A^{\#})^2A\\
    0&0
  \end{array}
\right).$$
Then we have
\begin{align*}
PP^{\#}Q&=\left(
  \begin{array}{cc}
    A&C\\
    0&0
  \end{array}
\right)\left(
  \begin{array}{cc}
    A^{\#}&(A^{\#})^2A\\
    0&0
  \end{array}
\right)\left(
  \begin{array}{cc}
    0&0\\
    B&D
  \end{array}
\right)\\
&=\left(
  \begin{array}{cc}
    AA^{\#}&A^{\#}A\\
    0&0
  \end{array}
\right)\left(
  \begin{array}{cc}
    0&0\\
    B&D
  \end{array}
\right)\\
&=\left(
  \begin{array}{cc}
    A^{\#}AB&A^{\#}AD\\
    0&0
  \end{array}
\right)\\
&=\lambda \left(
  \begin{array}{cc}
    A&C\\
    0&0
  \end{array}
\right)\\
&=\lambda P.
\end{align*}
In light of Theorem 2.4, $M=P+Q$ has group inverse, as desired.\end{proof}

\begin{cor} Let $A$ and $D$ have group inverses. If $BD^{\pi}=0, CA^{\pi}=0, CDD^{\#}=\lambda D$ and $ADD^{\#}=\lambda B,$ then $M$ has group inverse.\end{cor}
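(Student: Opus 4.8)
The plan is to obtain this corollary from Theorem 3.3 exactly as Corollary 3.2 was obtained from Theorem 3.1, namely by transposition. Transposition is an anti-isomorphism of $\mathbb{C}^{(m+n)\times(m+n)}$ that preserves group invertibility, with $(X^{T})^{\#}=(X^{\#})^{T}$ and hence $(X^{T})^{\pi}=(X^{\pi})^{T}$; crucially, it converts the left-sided projection conditions of Theorem 3.3 into the right-sided ones appearing here. Accordingly I would introduce the rearranged transpose
$$N=\left(\begin{array}{cc} D^{T} & B^{T}\\ C^{T} & A^{T}\end{array}\right),$$
in which the distinguished block $D^{T}$ sits in the top-left corner, and aim to show that $N$ satisfies the hypotheses of Theorem 3.3 with the same scalar $\lambda$. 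Reading $N$ as $\left(\begin{smallmatrix}\widehat A&\widehat C\\ \widehat B&\widehat D\end{smallmatrix}\right)$ fixes the dictionary $\widehat A=D^{T}$, $\widehat C=B^{T}$, $\widehat B=C^{T}$, $\widehat D=A^{T}$.

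Next I would translate the four hypotheses of Theorem 3.3 one by one through this dictionary. Since $A$ and $D$ have group inverses, so do $A^{T}$ and $D^{T}$, which secures the standing assumption. For the idempotent conditions, $\widehat A^{\pi}\widehat C=(D^{\pi})^{T}B^{T}=(BD^{\pi})^{T}$ and $\widehat D^{\pi}\widehat B=(A^{\pi})^{T}C^{T}=(CA^{\pi})^{T}$, which vanish precisely because $BD^{\pi}=0$ and $CA^{\pi}=0$. For the multiplicative conditions, using $\widehat A^{\#}\widehat A=(DD^{\#})^{T}$ together with the reversal of factors under transposition, $\widehat A^{\#}\widehat A\widehat B=(CDD^{\#})^{T}$ and $\widehat A^{\#}\widehat A\widehat D=(ADD^{\#})^{T}$; since transposition is linear and injective, these equal $\lambda\widehat A=\lambda D^{T}$ and $\lambda\widehat C=\lambda B^{T}$ exactly when $CDD^{\#}=\lambda D$ and $ADD^{\#}=\lambda B$. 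Thus every hypothesis of Theorem 3.3 holds for $N$.

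Theorem 3.3 then yields that $N$ has a group inverse, and transposing back returns the group invertibility of $M$. I expect the only real friction to be clerical rather than conceptual: because transposition reverses the order of products and carries the distinguished block across the diagonal, one must pin down the dictionary $\widehat A=D^{T}$, $\widehat C=B^{T}$, $\widehat B=C^{T}$, $\widehat D=A^{T}$ before computing and then keep track of it through each of the four identities. Once this correspondence is fixed, every verification reduces to a single transpose computation, and no invertibility input beyond the relation $(X^{T})^{\#}=(X^{\#})^{T}$ is required.
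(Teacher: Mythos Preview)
Your proposal is correct and follows essentially the same route as the paper: the paper's proof likewise applies Theorem~3.3 to the rearranged transpose $\left(\begin{smallmatrix}D^{T}&B^{T}\\ C^{T}&A^{T}\end{smallmatrix}\right)$ and then transfers group invertibility back to $M$. The only elision, that $N^{T}$ equals $M$ up to a block-permutation similarity rather than literally, is the same gloss the paper makes and is harmless since similarity preserves group invertibility.
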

\begin{proof} Applying Theorem 3.3 to the block matrix $$M^T=\left(
  \begin{array}{cc}
    D^T&B^T\\
    C^T&A^T
  \end{array}
\right),$$ we easily obtain the result as in Corollary 3.2.\end{proof}

It is convenient at this stage to prove the following.

\begin{thm} Let $A\in {\Bbb C}^{m\times m}, D\in {\Bbb C}^{n\times n}$ be idempotents and $rank(B)=rank(C)=rank(BC)=rank(CB)$. If $AD=\lambda AC, A$ $(I-CB)=0$ and $DBA^{\pi}C=0$, then $M$ has group inverse.
\end{thm}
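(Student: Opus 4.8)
The plan is to follow the template of Theorems 3.1 and 3.3: split $M=P+Q$ into two block summands, show each summand is group invertible, reduce the three hypotheses to the single scalar-multiple relation required by Theorem 2.4, and then conclude $M\in\mathcal A^{\#}$. Since $A$ and $D$ are idempotents we have $A^{\#}=A$, $D^{\#}=D$, $A^{\pi}=I-A$ and $D^{\pi}=I-D$, so every group inverse of a diagonal block simplifies. I would first try the splitting that keeps $A,C$ in $P=\begin{pmatrix}A&C\\0&0\end{pmatrix}$ and $B,D$ in $Q=\begin{pmatrix}0&0\\B&D\end{pmatrix}$ (and, should the scalar relation come out on the wrong side, the transpose splitting used in Corollary 3.2 and Corollary 3.4), because the hypotheses group $A,C,D$ together and isolate the term $DBA^{\pi}C$.

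The first substantive step is group invertibility of the summands, and this is where the rank hypotheses enter. The equalities $rank(B)=rank(C)=rank(BC)=rank(CB)$ are exactly the conditions guaranteeing that $BC$ and $CB$ are group invertible: for instance $rank(BC)=rank(C)$ yields $\ker(BC)=\ker C$ and $rank(CB)=rank(B)$ yields $\ker(CB)=\ker B$, the index-one conditions. Together with $A(I-CB)=0$, i.e. $A=ACB$, these identities let me apply \cite[Theorem 3.4]{B} to produce $P^{\#}$ and $Q^{\#}$ in closed block form in terms of $A$, $D$, $(BC)^{\#}$ and $(CB)^{\#}$, and to record $PP^{\#}$ and $QQ^{\#}$. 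I would then compute the block product entering Theorem 2.4, say $PP^{\#}Q$, and check block by block that it equals $\lambda P$: the relation $A=ACB$ fixes the $(1,1)$ block, the hypothesis $AD=\lambda AC$ supplies the scalar $\lambda$ in the off-diagonal block, and $DBA^{\pi}C=0$ is precisely the identity that annihilates the cross term carrying the factor $A^{\pi}=I-A$. Once the product collapses to $\lambda P$, Theorem 2.4 gives $M=P+Q\in\mathcal A^{\#}$.

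I expect two places to be the real obstacles. The first is verifying group invertibility of the summand whose idempotent diagonal block does not absorb its off-diagonal block, since one does not have $AC=C$ or $DB=B$ for free; here the rank conditions on $B,C$, rather than an annihilation condition, must be made to yield the index-one property and a usable formula for the group inverse. The second is the block bookkeeping in the scalar-relation check: the projection $A^{\pi}$ must be carried through the products, and the whole argument hinges on showing that the only surviving off-diagonal contribution is the term killed by $DBA^{\pi}C=0$. A final point to confirm is that the hypotheses are compatible with the restriction $\lambda\neq-1$ of Theorem 2.4, or else to route the degenerate case through Theorem 2.1 and Corollary 2.3, so that the cited result genuinely applies.
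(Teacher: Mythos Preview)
Your first obstacle is not just ``the real obstacle'' --- it is fatal for the splitting you propose. Under the stated hypotheses the matrix $P=\begin{pmatrix}A&C\\0&0\end{pmatrix}$ need \emph{not} be group invertible, and the rank hypotheses on $B,C,BC,CB$ cannot rescue it, since they say nothing about how the column space of $C$ sits relative to $A$. Concretely, take $m=n=2$,
\[
A=\begin{pmatrix}1&0\\0&0\end{pmatrix},\quad
D=\begin{pmatrix}0&0\\0&1\end{pmatrix},\quad
B=\begin{pmatrix}1&0\\0&0\end{pmatrix},\quad
C=\begin{pmatrix}1&0\\1&0\end{pmatrix},\quad \lambda=0.
\]
Then $A,D$ are idempotent, all four ranks equal $1$, $A(I-CB)=0$, $AD=0=\lambda AC$, and $DBA^{\pi}C=0$; yet $P$ has rank $2$ while $P^{2}=\begin{pmatrix}A&AC\\0&0\end{pmatrix}$ has rank $1$. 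The ``transpose'' splitting $\begin{pmatrix}A&0\\B&0\end{pmatrix}+\begin{pmatrix}0&C\\0&D\end{pmatrix}$ fails in the same example for the same reason (now the second summand drops rank). So neither Theorem~2.4 nor Theorem~2.1 can be applied to these summands: you never reach the scalar-relation check at all.

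The paper avoids this by a genuinely different decomposition,
\[
M=P+Q,\qquad
P=\begin{pmatrix}A&AC\\DB&D\end{pmatrix},\qquad
Q=\begin{pmatrix}0&A^{\pi}C\\D^{\pi}B&0\end{pmatrix},
\]
so that the ``dangerous'' parts $A^{\pi}C$ and $D^{\pi}B$ are isolated in the anti-diagonal piece $Q$. The hypothesis $AD=\lambda AC$ together with $A=ACB$ is used, via Theorem~3.3 (hence Theorem~2.4), to show that $P$ is group invertible; the rank hypotheses are used to show that $K=\begin{pmatrix}0&C\\B&0\end{pmatrix}$, and from it $Q$, is group invertible. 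The two pieces are then combined not through Theorem~2.4 but through the simpler relation $PQ=0$ (this is where $DBA^{\pi}C=0$ is spent) and the additive result \cite[Theorem~2.1]{B}. In short, Theorem~2.4 enters only indirectly, to handle the $P$ summand, and the splitting is chosen precisely so that each summand is group invertible --- the step your plan could not guarantee.
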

\begin{proof} Since $r(B)=r(C)=r(BC)=r(CB)$, it follows by ~\cite[Lemma 2.3]{BZ} that $BC$ and $CB$ have group inverses.
Let $K=\left(
  \begin{array}{cc}
   0&C \\
   B&0
  \end{array}
\right)$. Then $K^2=\left(
  \begin{array}{cc}
   CB&0 \\
   0&BC
  \end{array}
\right).$ By hypothesis, we have
\begin{align*}
rank(K^2)&=rank(CB)+rank(BC)\\
&=rank(C)+rank(B)\\
&=rank(K).
\end{align*} Then $K$ has group inverse.  \\
\indent
Write $Q:=\left(
  \begin{array}{cc}
   0&A^{\pi}C\\
   D^{\pi}B&0
   \end{array}
\right).$ Then we have $$Q=\left(
  \begin{array}{cc}
    A^{\pi}&0 \\
    0&D^{\pi}
  \end{array}
\right)\left(
  \begin{array}{cc}
   0&C\\
   B&0
   \end{array}
\right).$$ By hypothesis, we see that
$$Q=\left(
  \begin{array}{cc}
   0&C\\
   B&0
   \end{array}
\right)\left(
  \begin{array}{cc}
    A^{\pi}&0 \\
    0&D^{\pi}
  \end{array}
\right).$$ Therefore $N$ has group inverse and $$Q^{\#}=\left(
  \begin{array}{cc}
    A^{\pi}&0 \\
    0&D^{\pi}
  \end{array}
\right)\left(
  \begin{array}{cc}
  0& C(BC)^{\#} \\
  B(CB)^{\#}&0
  \end{array}
\right).$$
Let $P=\left(
  \begin{array}{cc}
    A & AC \\
    DB&D
  \end{array}
\right)$. Then $M=P+Q$. Clearly, $A^{\#}A(DB)=ADB=\lambda A, A^{\#}AD=AD=\lambda AC$, $A^{\pi}(AC)=0$ and $D^{\pi}(DB)=0$. In light of Theorem 3.3,
$P$ has group inverse. Since $ACD^{\pi}B=0, DBA^{\pi}C=0$, we check that
\begin{align*}
PQ&=\left(
  \begin{array}{cc}
    A & AC \\
    DB&D
  \end{array}
\right)\left(
  \begin{array}{cc}
   0&A^{\pi}C\\
   D^{\pi}B&0
   \end{array}
\right)\\
&=0.
\end{align*}
According to ~\cite[Theorem 2.1]{B}, $M$ has group inverse, as asserted.\end{proof}

\begin{cor} Let $A\in {\Bbb C}^{m\times m},D\in {\Bbb C}^{n\times n}$ be idempotents and $rank(B)=rank(C)=rank(BC)=rank(CB)$. If $AD=\lambda BD, (I-CB)D=0$ and $BD^{\pi}CA=0$, then $M$ has group inverse.
\end{cor}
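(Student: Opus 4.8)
The plan is to deduce this corollary from Theorem 3.5 by passing to the transpose, in the same way that Corollary 3.2 was deduced from Theorem 3.1 and Corollary 3.4 from Theorem 3.3. The facts I would use are that group invertibility is preserved under transposition, so that $(X^{T})^{\#}=(X^{\#})^{T}$ and hence $(X^{T})^{\pi}=(X^{\pi})^{T}$, that $rank(X^{T})=rank(X)$, and that transposition reverses products, $(XY)^{T}=Y^{T}X^{T}$. In particular $M$ has a group inverse if and only if $M^{T}$ does, so it is enough to check that the hypotheses of Theorem 3.5 hold for the transposed block matrix.

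First I would form $M^{T}=\begin{pmatrix} D^{T}&B^{T}\\ C^{T}&A^{T}\end{pmatrix}$, as in the proofs of Corollaries 3.2 and 3.4, and read off the data of Theorem 3.5 from it. Its diagonal blocks $D^{T}$ and $A^{T}$ are idempotent precisely because $D$ and $A$ are. For the off-diagonal blocks, $rank(C^{T})=rank(C)$ and $rank(B^{T})=rank(B)$, while $rank(C^{T}B^{T})=rank((BC)^{T})=rank(BC)$ and $rank(B^{T}C^{T})=rank((CB)^{T})=rank(CB)$; hence the hypothesis $rank(B)=rank(C)=rank(BC)=rank(CB)$ furnishes exactly the rank condition of Theorem 3.5 for $M^{T}$.

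It then remains to translate the three algebraic hypotheses. Writing the generic blocks of Theorem 3.5 as $A'=D^{T}$, $C'=B^{T}$, $B'=C^{T}$, $D'=A^{T}$, I would check the relations $A'D'=\lambda A'C'$, $A'(I-C'B')=0$ and $D'B'(A')^{\pi}C'=0$ in turn. Transposing $AD=\lambda BD$ gives $D^{T}A^{T}=\lambda D^{T}B^{T}$, i.e. $A'D'=\lambda A'C'$. Transposing $(I-CB)D=0$ gives $D^{T}(I-B^{T}C^{T})=0$, since $(CB)^{T}=B^{T}C^{T}$, i.e. $A'(I-C'B')=0$. Finally, using $(D^{\pi})^{T}=(D^{T})^{\pi}$ and reversing the product, $BD^{\pi}CA=0$ transposes to $A^{T}C^{T}(D^{T})^{\pi}B^{T}=0$, i.e. $D'B'(A')^{\pi}C'=0$. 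With all five hypotheses verified, Theorem 3.5 yields that $M^{T}$, and therefore $M$, has a group inverse.

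The step I expect to need the most care is this last translation. To recognise $BD^{\pi}CA=0$ as the relation $D'B'(A')^{\pi}C'=0$, one must apply $(XYZW)^{T}=W^{T}Z^{T}Y^{T}X^{T}$ together with $(D^{\pi})^{T}=(D^{T})^{\pi}$, and keep the correspondence $A'=D^{T}$, $C'=B^{T}$, $B'=C^{T}$, $D'=A^{T}$ fixed throughout, so that each transposed identity falls into its intended slot; an inadvertent exchange of the two off-diagonal blocks or of the product order would produce a different relation. Once this dictionary is pinned down, the remaining checks are the routine transpose identities, exactly as in the proofs of Corollaries 3.2 and 3.4.
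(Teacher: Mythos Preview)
Your proposal is correct and follows exactly the approach of the paper: apply Theorem 3.5 to the block matrix $\begin{pmatrix} D^{T}&B^{T}\\ C^{T}&A^{T}\end{pmatrix}$, as in Corollaries 3.2 and 3.4, and then transpose back. Your verification that the three algebraic hypotheses transpose to the three hypotheses of Theorem 3.5 under the dictionary $A'=D^{T}$, $B'=C^{T}$, $C'=B^{T}$, $D'=A^{T}$ is accurate, and in fact your write-up supplies the details the paper leaves implicit.
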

\begin{proof} Applying Theorem 3.5 to the block matrix $$M^T=\left(
  \begin{array}{cc}
    D^T&B^T\\
    C^T&A^T
  \end{array}
\right),$$ we complete the proof as in Corollary 3.4.\end{proof}

\section{Disclosure statement}
Authors declare that there have no potential conflict of interest to report.


\vskip10mm

\end{document}